\newtheorem{neu}{}
\newtheorem{Cor}[neu]{Corollary}
\newtheorem*{Cor*}{Corollary}
\newtheorem{Thm}[neu]{Theorem}
\newtheorem*{Thm*}{Theorem}
\newtheorem*{Observation*}{Observation}
\newtheorem*{Prop*}{Proposition}
\newtheorem{Lemma}[neu]{Lemma}
\theoremstyle{definition}\newtheorem*{Rmk*}{Remark}
\newtheorem{Rmk}[neu]{Remark}
\newtheorem*{Ex*}{Example}
\newtheorem*{Qu*}{Question}
\newcommand{\R}{\mathbb{R}}
\newcommand{\C}{\mathbb{C}}
\newcommand{\om}{\omega}
\newcommand{\A}{\mathscr{A}}
\renewcommand{\L}{\mathscr{L}}
\newcommand{\beq}{\begin{equation}}
\newcommand{\beqn}{\begin{equation}\nonumber}
\newcommand{\eeq}{\end{equation}}
\newcommand{\bea}{\begin{equation}\begin{aligned}}
\newcommand{\bean}{\begin{equation}\begin{aligned}\nonumber}
\newcommand{\eea}{\end{aligned}\end{equation}}
\definecolor{Urs}{rgb}{0,.7,0}
\definecolor{Peter}{rgb}{0,0,1}
\definecolor{red}{rgb}{1,0,0}
\newcommand{\Sym}{\mathrm{Sym}}
\newcommand{\Gl}{\mathrm{Gl}}
\newcommand{\Sp}{\mathrm{Sp}}
\newcommand{\1}{\mathbbm{1}}
\begin{document}
\title[The space of linear anti-symplectic involutions is a homogenous space]{The space of linear anti-symplectic involutions\\ is a homogenous space}
\author{Peter Albers}
\author{Urs Frauenfelder}
\address{
    Peter Albers\\
    Mathematisches Institut\\
    Westf\"alische Wilhelms-Universit\"at M\"unster}
\email{peter.albers@wwu.de}
\address{
    Urs Frauenfelder\\
    Department of Mathematics and Research Institute of Mathematics\\
    Seoul National University}
\email{frauenf@snu.ac.kr}
\keywords{}
\begin{abstract}
In this note we prove that the space of linear anti-symplectic involutions is the homogenous space $\mathrm{Gl}(n,\R)\backslash\Sp(n)$. This result is motivated by the study of symmetric periodic orbits in the restricted 3-body problem.
 \end{abstract}
\maketitle


\section{Introduction}

We denote by $(\R^{2n},\om)$ the standard symplectic vector space. Then
\beq
\Sp(n):=\{\psi\in\Gl(2n,\R)\mid \om(\psi v,\psi w)=\om(v,w),\;\forall v,w\in\R^{2n}\}
\eeq
is the linear symplectic group. We denote by $\A(n)$ the space of linear anti-symplectic involutions
\beq
\A(n):=\{S\in\Gl(2n,\R)\mid S^2=\1\quad\text{and}\quad \om(S v,S w)=-\om(v,w),\;\forall v,w\in\R^{2n}\}
\eeq
and by
\beq
R:=
\begin{pmatrix}
\1 & 0\\
0 & -\1 
\end{pmatrix}
\in\A(n)
\eeq
the standard anti-symplectic involution written in matrix form with respect to the Lagrangian splitting $\R^{2n}\cong\R^n\oplus\R^n\cong T^*\R^n$. Finally, we abbreviate
\beq
\Sp^R(n):=\{\psi\in\Sp(n)\mid \psi=R\psi^{-1}R\}\;.
\eeq
The group $\mathrm{Gl}(n,\R)$ is identified with the subgroup 
\beq\label{eqn:Gl(n,R)}
\left\{
\begin{pmatrix}
A & 0\\
0 & (A^T)^{-1} 
\end{pmatrix}\mid A\in\mathrm{Gl}(n,R)
\right\}
\eeq
of $\Sp(n)$. In this note we prove the following Theorems.

\begin{Thm}\label{thm:theorem1}
$\Sp^R(n)$, $\A(n)$, and the homogeneous space $\mathrm{Gl}(n,\R)\backslash\Sp(n)$ are diffeomorphic.
\end{Thm}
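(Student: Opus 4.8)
The plan is to exhibit explicit maps between the three spaces, getting the smooth structure from an orbit--stabilizer picture for the conjugation action of $\Sp(n)$ on $\A(n)$. I would first dispose of the identification $\A(n)\cong\Sp^R(n)$: the map $S\mapsto RS$ sends $\A(n)$ into $\Sp(n)$, since $\om(RSv,RSw)=-\om(Sv,Sw)=\om(v,w)$, and $RS$ satisfies $RS=R(RS)^{-1}R$ precisely when $S=S^{-1}$, which holds because $S^2=\1$; conversely, for $T\in\Sp^R(n)$ the identity $RTR=T^{-1}$ shows $RT\in\A(n)$. As $R^2=\1$, the two assignments $S\mapsto RS$ and $T\mapsto RT$ are mutually inverse bijections, and each is the restriction of the \emph{same} linear automorphism $X\mapsto RX$ of the space of $2n\times 2n$ matrices. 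Hence, once $\A(n)$ (equivalently $\Sp^R(n)$) is known to be a smooth submanifold, this bijection is automatically a diffeomorphism.

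Next I would study the conjugation action $\psi\cdot S:=\psi S\psi^{-1}$ of $\Sp(n)$ on $\A(n)$ (one checks $\psi S\psi^{-1}$ is again an anti-symplectic involution). The point is that an anti-symplectic involution $S$ is the same datum as an ordered Lagrangian splitting: for the $(\pm1)$-eigenspace decomposition $\R^{2n}=V_+\oplus V_-$ of $S$, the anti-symplectic condition forces $\om|_{V_\pm}\equiv 0$, so $V_+$ and $V_-$ are transverse isotropic subspaces of complementary dimension, hence transverse Lagrangians; conversely $S$ is recovered as $\id$ on $V_+$ and $-\id$ on $V_-$, and $R$ corresponds to the standard splitting $\R^n\oplus\R^n$. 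Since $\Sp(n)$ acts transitively on ordered pairs of transverse Lagrangians — complete a basis of the first to a symplectic basis adapted to the pair, and the symplectic transformation carrying the standard symplectic basis to it carries $R$ to $S$ — the conjugation action is transitive on $\A(n)$. A short block-matrix computation then identifies the stabilizer $\{\psi\in\Sp(n)\mid\psi R=R\psi\}$ with the block-diagonal subgroup $\Gl(n,\R)$ of \eqref{eqn:Gl(n,R)}.

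To conclude I would feed this into the homogeneous-space theorem. Consider the smooth right action $X\cdot\psi:=\psi^{-1}X\psi$ of $\Sp(n)$ on the open invariant submanifold $\Gl(2n,\R)$; its orbit through $R$ is, by transitivity, exactly $\A(n)$, which is closed — it is cut out by the polynomial equations $S^2=\1$ and $\om(Sv,Sw)=-\om(v,w)$ — hence locally closed. Therefore $\A(n)$ is an embedded submanifold and the orbit map descends to a diffeomorphism $\Gl(n,\R)\backslash\Sp(n)\xrightarrow{\ \sim\ }\A(n)$, $\Gl(n,\R)\psi\mapsto\psi^{-1}R\psi$, equivariant for the evident right $\Sp(n)$-actions. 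Transporting this manifold structure back through the first step, the bijection $\A(n)\cong\Sp^R(n)$ becomes a diffeomorphism as well, and all three spaces are diffeomorphic.

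The only genuinely non-formal input is the transitivity statement in the second paragraph: recognizing that anti-symplectic involutions correspond to Lagrangian splittings and that $\Sp(n)$ acts transitively on those. Everything else is bookkeeping, the one point to handle with a little care being the promotion of the set-theoretic bijections to diffeomorphisms, for which it is cleanest to install the manifold structure via the homogeneous space and propagate it through $S\mapsto RS$, rather than charting $\A(n)$ or $\Sp^R(n)$ by hand.
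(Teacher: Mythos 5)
Your proposal is correct and takes essentially the same route as the paper: the bijection $S\mapsto RS$ between $\A(n)$ and $\Sp^R(n)$, the identification of anti-symplectic involutions with ordered Lagrangian splittings via the $(\pm1)$-eigenspaces, transitivity of the conjugation action through an adapted symplectic basis, and the computation that the stabilizer of $R$ is the block-diagonal copy of $\Gl(n,\R)$. Your extra care in deriving the smooth structure from the orbit theorem (checking that $\A(n)$ is closed in $\Gl(2n,\R)$) is a welcome refinement of a point the paper leaves implicit, but it does not change the argument.
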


\begin{Thm}\label{thm:theorem2}
Every element in $\Sp(n)$ is conjugate in $\Sp(n)$ to an element in $\Sp^R(n)$, i.e.~for all $\phi\in\Sp(n)$ there exists $\psi\in\Sp(n)$ with $\psi\phi\psi^{-1}\in\Sp^R(n)$.
\end{Thm}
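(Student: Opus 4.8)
The plan is to reduce Theorem~\ref{thm:theorem2}, by means of Theorem~\ref{thm:theorem1}, to a statement about anti-symplectic involutions, and then to prove that statement block by block along the symplectic normal form of $\phi$. Theorem~\ref{thm:theorem1} provides the following: $\A(n)$ is a single orbit of the conjugation action of $\Sp(n)$, namely the orbit of $R$ (whose stabiliser is the subgroup~\eqref{eqn:Gl(n,R)}), and $\Sp^R(n)=R\cdot\A(n)$ via $S\mapsto RS$. Granting these, $\psi\phi\psi^{-1}\in\Sp^R(n)$ says $\psi\phi\psi^{-1}=RS$ with $S\in\A(n)$, i.e.\ $\phi=(\psi^{-1}R\psi)(\psi^{-1}S\psi)$ is a product of two anti-symplectic involutions; conversely, if $\phi=T_1T_2$ with $T_i\in\A(n)$ and $\psi$ satisfies $\psi T_1\psi^{-1}=R$, then $\psi\phi\psi^{-1}=R\,(\psi T_2\psi^{-1})\in\Sp^R(n)$; finally, $\phi$ is a product of two anti-symplectic involutions if and only if some $S\in\A(n)$ satisfies $S\phi S=\phi^{-1}$ (take $T_1=S$, $T_2=S\phi$, using $S^2=\1$). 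Thus Theorem~\ref{thm:theorem2} is equivalent to: \emph{for every $\phi\in\Sp(n)$ there is an anti-symplectic involution $S$ with $S\phi S=\phi^{-1}$}.

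This property is invariant under conjugating $\phi$ and is additive over symplectically orthogonal, $\phi$-invariant splittings of $\R^{2n}$, since a direct sum of anti-symplectic involutions over such a splitting is again one. Decomposing $\phi$ according to its symplectic eigenvalue orbits $\{\lambda,\lambda^{-1},\bar\lambda,\bar\lambda^{-1}\}$, I would reduce to a single orbit. If $\lambda$ does not lie on the unit circle, the generalized eigenspaces attached to $\lambda$ and to $\lambda^{-1}$ (for a complex quadruple, the spaces attached to the two conjugate pairs) are complementary $\om$-Lagrangian, $\phi$-invariant subspaces, so in $\om$-dual bases $\phi=\mathrm{diag}\!\big(A,(A^T)^{-1}\big)$; then
\beq
S=\begin{pmatrix}0 & Q^{-1}\\ Q & 0\end{pmatrix},\qquad\text{with }Q=Q^{T}\text{ invertible and }QAQ^{-1}=A^{T},
\eeq
is an anti-symplectic involution with $S\phi S=\phi^{-1}$ — such a symmetric $Q$ exists because every real square matrix is conjugate to its transpose by a symmetric matrix. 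The case $\lambda=-1$ reduces to $\lambda=1$: writing $\phi=-u$ with $u:=-\phi$ unipotent, any $S$ inverting $u$ inverts $\phi$.

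The remaining and hardest case is that of unit-circle eigenvalues, most of all $\lambda=1$, where $\phi=\exp(N)$ with $N\in\mathfrak{sp}(n)$ nilpotent. Here I would embed $N$ into an $\mathfrak{sl}_2$-triple inside $\mathfrak{sp}(n)$, making $\R^{2n}$ a symplectic representation of $\mathfrak{sl}_2$; on each irreducible summand $\mathrm{Sym}^{k}(\R^2)$ — grouping summands in pairs where the invariant form forces it — the map induced by $\mathrm{diag}(1,-1)\in\Gl(2,\R)$, possibly composed with a swap of a paired summand, is an anti-symplectic involution conjugating $N$ to $-N$, hence $\phi$ to $\phi^{-1}$. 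A general unit-circle eigenvalue $e^{i\theta}$, $\theta\neq 0,\pi$, would be treated by the same mechanism after using the multiplicative Jordan decomposition and a compatible complex structure to write $\phi$, on the corresponding block, as an elliptic rotation times a commuting unipotent and inverting both at once. The conceptual reason everything fits: an anti-symplectic conjugation negates $\om$ and therefore reverses exactly the sign (Krein-type) invariants of the symplectic normal form, which are precisely the invariants that separate $\phi$ from $\phi^{-1}$ — already in $\Sp(1)$ the matrix $\begin{pmatrix}1&1\\0&1\end{pmatrix}$ is not conjugate in $\Sp(1)$ to its inverse, yet $R$ conjugates one to the other. I expect this unit-circle and nilpotent case to be the main obstacle, the delicate point being to arrange that the constructed $S$ is simultaneously an involution, anti-symplectic, and an inverter of $\phi$. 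An alternative is to first extract, from the sign-invariant comparison, an anti-symplectic $T$ (not \emph{a priori} an involution) with $T\phi T^{-1}=\phi^{-1}$, observe that $T^2\in Z_{\Sp(n)}(\phi)$, and then upgrade $T$ to an involution by multiplying it by a suitable $T$-invariant square root of $T^{-2}$ in the centraliser.
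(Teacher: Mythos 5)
Your opening paragraph is exactly the paper's route: the paper proves Theorem \ref{thm:theorem2} by quoting Wonenburger's theorem that every $\phi\in\Sp(n)$ factors as $TS$ with $S,T\in\A(n)$, and then conjugating $S$ to $R$ via Lemma \ref{lem:existence_of_symplectic_basis_for_Lagr_splitting} (equivalently, via the transitivity of the conjugation action on $\A(n)$ established in Theorem \ref{thm:theorem1}). Your chain of equivalences ($\psi\phi\psi^{-1}\in\Sp^R(n)$ iff $\phi=T_1T_2$ with $T_i\in\A(n)$ iff $S\phi S=\phi^{-1}$ for some $S\in\A(n)$) is correct, so if you are permitted to cite Wonenburger you are finished after that paragraph. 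The difference is that you then try to reprove Wonenburger's theorem by symplectic normal forms, and that part has a genuine gap.

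The off-circle case is complete and correct: the block form $\mathrm{diag}\bigl(A,(A^T)^{-1}\bigr)$, the swap involution built from a symmetric $Q$ with $QAQ^{-1}=A^T$ (Taussky--Zassenhaus), and the reduction of $\lambda=-1$ to $\lambda=1$ all check out. But the unit-circle case — which, as your own $\Sp(1)$ example with $\bigl(\begin{smallmatrix}1&1\\0&1\end{smallmatrix}\bigr)$ shows, is where all the content of Wonenburger's theorem lives — is only a sketch. For the unipotent blocks you still need to verify that the $\mathfrak{sl}_2$-triple can be chosen inside $\mathfrak{sp}(n)$, that $\om$ decomposes as (invariant form on $\mathrm{Sym}^k(\R^2)$) tensor (form on the multiplicity space) of the complementary symmetry type, and — for even $k$, where odd-dimensional irreducibles are paired by a symplectic form on the multiplicity space — that the ``swap composed with $\mathrm{diag}(1,-1)$'' really is simultaneously anti-symplectic, involutive, and $N$-reversing; none of this is automatic, and the sign bookkeeping is exactly where such arguments usually fail. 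Worse, for elliptic eigenvalues $e^{i\theta}$, $\theta\neq 0,\pi$, with nontrivial Jordan structure, the ``compatible complex structure'' and the simultaneous inversion of the semisimple and unipotent parts are asserted rather than constructed, and the Krein-signature invariants you correctly identify as the obstruction are precisely what must be matched here. Your fallback (first find an anti-symplectic $T$ with $T\phi T^{-1}=\phi^{-1}$, then correct $T^2\in Z_{\Sp(n)}(\phi)$ by a square root in the centralizer) is a reasonable strategy but is likewise unproven at both steps. So, as written, the argument establishes the theorem only modulo the unit-circle case of Wonenburger's theorem — the one case that cannot be waved through.
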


\begin{Rmk}
The principal interest in the space $\Sp^R(n)$ comes from studying symmetric periodic orbits of Hamiltonian dynamical systems invariant under anti-symplectic involution. Indeed, the linearized Poincar\'e return map takes values in $\Sp^R(n)$. In the restricted 3-body problem an antisymplectic involution plays a crucial role already in the work of Birkhoff \cite{Birkhoff_The_restricted_problem_of_three_bodies}. In fact, the term \textit{symmetric periodic orbit} originates from Birkhoff's work. For more details we refer the reader to \cite{Frauenfelder_vKoert_Hormadner_index_of_symmetric_periodic_orbits}. 

As in symplectic field theory it is important to understand the dichotomy of good and bad periodic orbits, cp.~\cite{Eliashberg_Givental_Hofer_SFT}. This property only depends on the conjugacy class of the linearized Poincar\'e return map. In particular, by Theorem \ref{thm:theorem2} the property of being \textit{symmetric} does not pose any obstructions on the conjugacy class of the linearized Poincar\'e return map.
\end{Rmk}

\begin{Rmk}
The homogeneous space structure is not unique.  At the end of this article we give the Lagrangian Grassmannian a second homogenous structure, see Corollary \ref{cor}.
\end{Rmk}

\section{Proof of Theorems \ref{thm:theorem1} and \ref{thm:theorem2}}

As preparation we need 
\begin{Lemma}\label{lem:Gl(n,R)_=_commute_with_R}
$\mathrm{Gl}(n,\R)$ equals
\beq
\{\psi\in\Sp(n)\mid R\psi=\psi R\}\;.
\eeq
\end{Lemma}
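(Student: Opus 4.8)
The claim is a purely linear-algebraic identification: the subgroup of $\Sp(n)$ described in \eqref{eqn:Gl(n,R)} coincides with the centralizer of $R$ in $\Sp(n)$. The plan is to prove the two inclusions separately. For the inclusion ``$\subseteq$'', I would simply take a block matrix $\psi=\left(\begin{smallmatrix}A&0\\0&(A^T)^{-1}\end{smallmatrix}\right)$ and compute $R\psi$ and $\psi R$ directly using the block form $R=\left(\begin{smallmatrix}\1&0\\0&-\1\end{smallmatrix}\right)$; both products equal $\left(\begin{smallmatrix}A&0\\0&-(A^T)^{-1}\end{smallmatrix}\right)$, so $\psi$ commutes with $R$. (One should also recall, or note in passing, that such $\psi$ genuinely lies in $\Sp(n)$, which is the standard fact that $\eqref{eqn:Gl(n,R)}$ is a subgroup of $\Sp(n)$; this is presumably already known in the paper's setup.)

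For the reverse inclusion ``$\supseteq$'', I would start with an arbitrary $\psi\in\Sp(n)$ with $R\psi=\psi R$, write it in block form $\psi=\left(\begin{smallmatrix}A&B\\C&D\end{smallmatrix}\right)$ with respect to the Lagrangian splitting $\R^{2n}\cong\R^n\oplus\R^n$, and use the commutation relation to kill the off-diagonal blocks. Indeed $R\psi=\left(\begin{smallmatrix}A&B\\-C&-D\end{smallmatrix}\right)$ while $\psi R=\left(\begin{smallmatrix}A&-B\\C&-D\end{smallmatrix}\right)$, so $R\psi=\psi R$ forces $B=0$ and $C=0$. Thus $\psi=\left(\begin{smallmatrix}A&0\\0&D\end{smallmatrix}\right)$. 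Now I impose that $\psi$ is symplectic: writing the standard symplectic form as $\om(v,w)=\langle v, J_0 w\rangle$ with $J_0=\left(\begin{smallmatrix}0&-\1\\\1&0\end{smallmatrix}\right)$, the condition $\psi^T J_0 \psi = J_0$ applied to the block-diagonal $\psi$ yields $A^T D = \1$ (and $D^T A=\1$), i.e. $D=(A^T)^{-1}$. Hence $\psi$ has exactly the form in \eqref{eqn:Gl(n,R)}, and in particular $A\in\Gl(n,\R)$ since $\psi$ is invertible.

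There is essentially no hard part here — it is a short bookkeeping computation — but the one point requiring a little care is the sign convention for $\om$ and hence for $J_0$, so that the symplectic condition on a block-diagonal matrix comes out as $A^T D=\1$ rather than something with a spurious sign; getting this right is what pins down $D=(A^T)^{-1}$ exactly as in the stated identification of $\Gl(n,\R)$ with its image in $\Sp(n)$. I would present the proof in roughly four lines: the block-form computation showing $R\psi=\psi R$ forces $\psi$ block-diagonal, the symplectic condition forcing the lower block to be $(A^T)^{-1}$, and the trivial converse check.
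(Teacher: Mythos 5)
Your proof is correct and follows essentially the same route as the paper: write $\psi$ in block form, use the commutation with $R$ to force $B=C=0$, and then use the symplectic condition to conclude $D=(A^T)^{-1}$. The only cosmetic difference is that you derive the block condition $A^TD=\1$ directly from $\psi^TJ_0\psi=J_0$, whereas the paper quotes the standard characterization $AD^T-C^TB=\1$, $A^TC=C^TA$, $B^TD=D^TB$ of symplectic block matrices; both yield $D=(A^T)^{-1}$.
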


\begin{proof}
That $\mathrm{Gl}(n,\R)$ is contained in the given set follows from equation \eqref{eqn:Gl(n,R)}. For the opposite inclusion we recall that a matrix
\beq
\psi:=\begin{pmatrix}
A & B\\
C & D 
\end{pmatrix}
\eeq
is in $\Sp(n)$ if and only if 
\beq
AD^T-C^TB=\1,\; A^TC=C^TA,\quad\text{and}\quad B^TD=D^TB\;,
\eeq
see \cite[Exercise 1.13]{McDuff_Salamon_introduction_symplectic_topology}. Thus, the equality
\beq
R\psi R=
\begin{pmatrix}
A & - B \\
-C & D 
\end{pmatrix}
\stackrel{!}{=}\psi
\eeq
implies that $B=C=0$ and since $\psi$ is symplectic we conclude from $AD^T-C^TB=\1$ that $D^T=A^{-1}$.
\end{proof}

The following Lemma is well-known. For the readers convenience we include a proof.
\begin{Lemma}\label{lem:existence_of_symplectic_basis_for_Lagr_splitting}
Let $\R^{2n}=L_1\oplus L_2$ be a Lagrangian splitting of $\R^{2n}$, i.e.~$L_1,L_2$ are Lagrangian subspaces, then there exists bases $v_1,\ldots,v_n$ of $L_1$ and $w_1,\ldots,w_n$ of $L_2$ such that
\beq
\om(v_i,w_j)=\delta_{ij},\quad i,j=1,\ldots,n
\eeq
that is, $v_1,\ldots,v_n,w_1\ldots,w_n$ is a symplectic basis of $\R^{2n}$.
\end{Lemma}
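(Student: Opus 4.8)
The plan is to build the desired symplectic basis by a Gram–Schmidt-style process adapted to the symplectic form, using crucially that $L_1$ and $L_2$ are complementary Lagrangians. First I would observe that the pairing $\om\colon L_1\times L_2\to\R$ is nondegenerate: if $v\in L_1$ satisfied $\om(v,w)=0$ for all $w\in L_2$, then since $L_1$ is Lagrangian we would also have $\om(v,u)=0$ for all $u\in L_1$, hence $\om(v,\cdot)=0$ on $L_1\oplus L_2=\R^{2n}$, forcing $v=0$; symmetrically on the $L_2$ side. Thus $\om$ restricts to a perfect pairing between the two $n$-dimensional spaces $L_1$ and $L_2$.

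Given a perfect pairing, the existence of dual bases is standard linear algebra: pick any basis $v_1,\ldots,v_n$ of $L_1$, and let $w_1,\ldots,w_n$ be the basis of $L_2$ dual to it under this pairing, i.e.\ determined by $\om(v_i,w_j)=\delta_{ij}$. This $w_j$ exists and is unique because the map $L_2\to L_1^\ast$, $w\mapsto\om(\cdot,w)$, is an isomorphism. It then only remains to check that $v_1,\ldots,v_n,w_1,\ldots,w_n$ is a symplectic basis: the relations $\om(v_i,v_j)=0$ and $\om(w_i,w_j)=0$ hold because $L_1$ and $L_2$ are Lagrangian, and $\om(v_i,w_j)=\delta_{ij}$ by construction, so all the defining relations of a symplectic basis are satisfied. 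Since $2n$ vectors satisfying these nondegeneracy relations are automatically linearly independent, they form a basis of $\R^{2n}$.

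I do not expect any serious obstacle here — the lemma is genuinely elementary. The one point that deserves a sentence of care is the nondegeneracy of the pairing $\om|_{L_1\times L_2}$, since that is where the hypothesis ``$L_1\oplus L_2=\R^{2n}$'' (as opposed to just ``$L_1,L_2$ Lagrangian'') is actually used; everything else is bookkeeping. An alternative, slightly more hands-on route would be an inductive construction: choose $v_1\in L_1\setminus\{0\}$, find $w_1\in L_2$ with $\om(v_1,w_1)=1$ using nondegeneracy of the pairing, and then restrict to the $\om$-orthogonal complement, noting that it meets $L_1$ and $L_2$ in Lagrangian subspaces of a lower-dimensional symplectic space with complementary intersection, so the induction hypothesis applies. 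I would present the dual-basis argument as the main line since it is shorter, and perhaps remark on the inductive version as the more classical ``symplectic Gram–Schmidt'' phrasing.
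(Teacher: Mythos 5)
Your argument is correct, but it takes a different (and shorter) route than the paper. You isolate the key fact once and for all: because $L_1,L_2$ are Lagrangian and complementary, $\om$ restricts to a perfect pairing $L_1\times L_2\to\R$, so the map $L_2\to L_1^\ast$, $w\mapsto\om(\cdot,w)$, is an isomorphism and you can take the $\om$-dual basis of $L_2$ to \emph{any} chosen basis of $L_1$. The paper instead runs the ``symplectic Gram--Schmidt'' induction that you only sketch as an alternative: it picks $v_1\in L_1\setminus\{0\}$, finds $\hat w_1$ with $\om(v_1,\hat w_1)=1$ using nondegeneracy of $\om$ on all of $\R^{2n}$, projects $\hat w_1$ to $L_2$ along $L_1$ (the Lagrangian condition on $L_1$ guarantees the pairing is unchanged), and then uses a dimension count $\dim\bigl(\bigcap_{i}\ker\om(\cdot,w_i)\cap L_1\bigr)\geq n-k\geq 1$ to continue, checking linear independence by hand at each stage. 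Your version buys brevity and the slightly stronger statement that an arbitrary basis of $L_1$ can be completed; the paper's version avoids invoking the dual-basis isomorphism and constructs both bases simultaneously, at the cost of the bookkeeping you correctly describe as avoidable. Your nondegeneracy argument for the restricted pairing is exactly right, and you correctly identify that this is the one place where $L_1\oplus L_2=\R^{2n}$ (rather than mere Lagrangianity) enters; the final linear-independence remark could be replaced by the even simpler observation that the union of a basis of $L_1$ and a basis of $L_2$ is a basis of the direct sum.
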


\begin{proof}
We denote by $\Pi_1:\R^{2n}\to\R^{2n}$ the projection to $L_1$ along $L_2$ and by $\Pi_2=\1-\Pi_1$ the projection to $L_2$ along $L_1$.
We prove the Lemma by induction. For $1\leq k\leq n$ we denote by A(k) the following assertion.

\textbf{A(k)}:\quad There exist linearly independent vectors $v_1,\ldots,v_k\in L_1$  and $w_1,\ldots,w_k\in L_2$ such that
\beq
\om(v_i,w_j)=\delta_{ij},\quad i,j=1,\ldots,k\;.
\eeq

\textbf{A(1)}:\quad We choose $v_1\in L_1\setminus\{0\}$. Then there exists $\hat{w}_1\in\R^{2n}$ with 
\beq
\om(v_1,\hat{w}_1)=1\;.
\eeq 
We set
\beq
w_1:=\Pi_2(\hat{w}_1)
\eeq
and compute using the fact that $L_1$ is Lagrangian that
\beq\label{eqn:remove_hat}
1=\om(v_1,\hat{w}_1)=\om(v_1,\underbrace{\Pi_1(\hat{w}_1)}_{\in L_1})+\om(v_1,\underbrace{\Pi_2(\hat{w}_1)}_{=w_1})=\om(v_1,w_1)\;.
\eeq
This verifies \textbf{A(1)}. Next we assume that $k\leq n-1$ and prove \textbf{A(k)} $\Rightarrow$ \textbf{A(k+1)}:\quad Since
\beq
\dim\left(\bigcap_{i=1}^k\ker \om(\cdot,w_i)\cap L_1\right)\geq 2n-k-n\geq 1
\eeq
there exists $v_{k+1}\in L_1\setminus\{0\}$ with
\beq
\om(v_{k+1},w_i)=0,\quad 1\leq i\leq k\;.
\eeq
To show that $v_1,\ldots,v_{k+1}$ are linearly independent we assume that
\beq\label{eqn:lin_indep_ansatz}
\sum_{i=1}^{k+1}a_iv_i=0
\eeq
and compute $j=1,\dots,k$ using the induction hypothesis
\bea
0=\om\bigg(\sum_{i=1}^{k+1}a_iv_i,w_j\bigg)=\sum_{i=1}^{k+1}a_i\underbrace{\om(v_i,w_j)}_{\delta_{ij}}=a_j\;.
\eea
Therefore, equation \eqref{eqn:lin_indep_ansatz} is reduced to $a_{j+1}v_{j+1}=0$ and we conclude $a_1=\ldots=a_{j+1}=0$, proving linear independence. Since $\om$ is non-degenerate there exists $\hat{w}_{k+1}$ with
\beq
\om(v_{k+1},\hat{w}_{k+1})=1\;.
\eeq
We set
\beq
w_{k+1}:=\Pi_2(\hat{w}_{k+1})
\eeq
and conclude as in \eqref{eqn:remove_hat} that $\om(v_{k+1},w_{k+1})=1$. Proving that $w_1,\ldots,w_{k+1}$ are linearly independent is done exactly the same way as for $v_1,\ldots,v_{k+1}$. This finishes the proof.
\end{proof}

Now we prove Theorem \ref{thm:theorem1}.
\begin{proof}[Proof of Theorem \ref{thm:theorem1}]
We consider the map
\bea
\Sp^R(n)&\to\A(n)\\
\psi&\mapsto R\psi
\eea
which is well-defined since $R\psi$ is anti-symplectic and
\beq
(R\psi)^2=R\psi R\psi=\psi^{-1}\psi=\1\;.
\eeq
Moreover, the map is a diffeomorphism with inverse
\bea
\A(n)&\to\Sp^R(n)\\
S&\mapsto RS
\eea
which again is well-defined since $RS$ is symplectic and 
\beq
R(RS)^{-1}R=RSRR=RS\;.
\eeq
Thus, $\Sp^R(n)\cong\A(n)$. It remains to prove $\A(n)\cong\mathrm{Gl}(n,\R)\backslash\Sp(n)$. By Lemma \ref{lem:Gl(n,R)_=_commute_with_R} the conjugation map
\bea
\Sp(n)&\to\A(n)\\
\psi&\mapsto \psi^{-1}R\psi
\eea
descends to a map
\beq
\mathfrak{C}:\mathrm{Gl}(n,\R)\backslash\Sp(n)\to\A(n)\;.
\eeq
In order to check that this is an isomorphism we observe that any involution $S$ is diagonalizable with eigenvalues $\pm1$. Indeed, if we set
\beq
V_{\pm1}:=\{v\in\R^{2n}\mid Sv=\pm v\}
\eeq
then
\bea\label{eqn:R2n=V_1+V_-1}
\R^{2n}&\cong V_1\oplus V_{-1}\\
v&\mapsto \tfrac12 (v+Sv)+\tfrac12(v-Sv)\;.
\eea
$S$ being anti-symplectic implies that the linear spaces $V_{\pm1}$ are isotropic since
\beq
\om(v,w)=-\om(Sv,Sw)=-\om(v,w)
\eeq
if $v,w\in V_1$ or $v,w\in V_{-1}$. Thus, by \eqref{eqn:R2n=V_1+V_-1}, $V_{\pm1}$ are Lagrangian. Now we choose according to Lemma \ref{lem:existence_of_symplectic_basis_for_Lagr_splitting} a symplectic basis $v_1,\ldots,v_n,w_1,\ldots,w_n$ of $\R^{2n}$ with $v_1,\ldots,v_n$ being a basis of $L_1$ and  $w_1,\ldots,w_n$ being a basis of $L_2$. We set
\beq
\psi^{-1}:=
\begin{pmatrix}
v_1,\dots,v_n,w_1,\dots,w_n 
\end{pmatrix}
\in\Sp(n)\;.
\eeq
With this definition it follows that 
\beq
\psi^{-1}R\psi=S
\eeq
and therefore the map $\mathfrak{C}$ is surjective. To check injectivity we note that
\beq
\psi_1^{-1}R\psi_1=\psi_2^{-1}R\psi_2\quad\Longleftrightarrow\quad \psi_2\psi_1^{-1}R\psi_1\psi_2^{-1}=R\;.
\eeq
Thus by Lemma \ref{lem:Gl(n,R)_=_commute_with_R} the latter implies that $\psi_1\psi_2^{-1}\in\mathrm{Gl}(n,R)$.
\end{proof}

\begin{proof}[Proof of Theorem 2]
According to Wonenburger \cite[Theorem 2]{Wonenburger_Transformations_which_are_products_of_two_involutions} we can write any $\phi\in\Sp(n)$ as a product of two linear anti-symplectic involutions:
\beq
\phi=TS,\qquad S,T\in\A(n)\;.
\eeq
Using Lemma \ref{lem:existence_of_symplectic_basis_for_Lagr_splitting} as in the proof of Theorem \ref{thm:theorem1} we can find a symplectic matrix $\psi\in\Sp(n)$ s.t.
\beq
\psi^{-1}R\psi=S\;.
\eeq
If we set
\beq
\widetilde{\phi}:=\psi\phi\psi^{-1}=\psi T\psi^{-1} \psi S\psi^{-1}=\underbrace{\psi T\psi^{-1}}_{\widetilde{T}\in\A(n)} R=\widetilde{T}R
\eeq
then we conclude
\beq
R\widetilde{\phi}R=R\widetilde{T}RR=R\widetilde{T}=\widetilde{\phi}^{-1}
\eeq
i.e.
\beq
\widetilde{\phi}\in\Sp^R(n)\;.
\eeq
\end{proof}

\section{Concluding remarks}

\begin{Rmk}
It is well-know that the homogeneous space $U(n)/O(n)$ is diffeomorphic to $U(n)\cap\Sym(n)$, where $\Sym(n)$ is the space of symmetric matrices in $\Gl(n,\C)$:
\beq
U(n)/O(n)\cong\{\theta\in U(n)\mid \theta=\theta^T\}\;.
\eeq
This can be seen similarly as in the proof of Theorem \ref{thm:theorem1}. The same maps restricted to the space of \textit{orthogonal} anti-symplectic involutions give the above claimed identification.

We recall that $U(n)/O(n)$ is diffeomorphic to the Lagrangian Grassmannian $\L(n)$, that is the space of all linear Lagrangian subspaces of $\R^{2n}$, see \cite[Lemma 2.31]{McDuff_Salamon_introduction_symplectic_topology}
\end{Rmk}

We close with the following unexpected observation, see Corollary \ref{cor}. We point out that the space $\A(n)$ can be identified with the space of ordered Lagrangian splittings of $\R^{2n}$, cp.~equation \eqref{eqn:R2n=V_1+V_-1}. Moreover, $\A(n)$ has a natural projection $\pi:\A(n)\to\L(n)$ to the Lagrangian Grassmannian $\L(n)$ given by $\pi(S):=Fix(S)$. The fiber $\pi^{-1}(L)$ consists of all Lagrangian splittings of the from $L\oplus\widetilde{L}$. \cite[Lemmas 2.30, 2.31]{McDuff_Salamon_introduction_symplectic_topology} imply that every such $\widetilde{L}$ is the graph over $L^\perp$ of a symmetric matrix. Thus, we can identify $\pi^{-1}(L)\cong\Sym(n)$. Similarly the tangent space $T_L\L(n)\cong\Sym(n)$. We proved
\begin{Lemma}
The space of linear anti-symplectic involutions is diffeomorphic to the tangent bundle of the Lagrangian Grassmannian
\beq
\A(n)\cong T\L(n)\;.
\eeq 
\end{Lemma}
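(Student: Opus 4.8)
The plan is to exhibit an explicit diffeomorphism $\A(n)\to T\L(n)$ by unpacking the two identifications already indicated in the remark preceding the statement, and then checking smoothness and bijectivity fiberwise over $\L(n)$.

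First I would recall the setup. Given $S\in\A(n)$, set $L:=\Fix(S)=V_1$, which is Lagrangian by the argument in the proof of Theorem \ref{thm:theorem1}, and let $\widetilde L:=V_{-1}$, also Lagrangian, so that $\R^{2n}=L\oplus\widetilde L$. This defines the base point $\pi(S)=L\in\L(n)$. To produce the fiber coordinate, fix once and for all a complex structure $J$ compatible with $\om$ (equivalently a Euclidean inner product $\langle\cdot,\cdot\rangle=\om(\cdot,J\cdot)$), so that $L^\perp=JL$ is the canonical complement. By \cite[Lemmas 2.30, 2.31]{McDuff_Salamon_introduction_symplectic_topology}, any Lagrangian $\widetilde L$ transverse to $L$ is the graph over $L^\perp$ of a uniquely determined symmetric endomorphism, i.e. $\widetilde L=\{v+A v\mid v\in L^\perp\}$ for a unique $A\in\Sym(L^\perp)\cong\Sym(n)$; and the same reference identifies $T_L\L(n)$ with $\Sym(L^\perp)$ via the standard chart around $L$. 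Hence assigning to $S$ the pair $(L,A)$ gives a well-defined map $\Phi:\A(n)\to T\L(n)$, $\Phi(S)=(L,A)$, which by construction covers $\pi$.

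Next I would check the map is a bijection. For surjectivity: given $(L,A)\in T\L(n)$, form $\widetilde L$ as the graph of $A$ over $L^\perp$; since $L\oplus\widetilde L=\R^{2n}$ is a Lagrangian splitting, the projections $\Pi_L,\Pi_{\widetilde L}$ are defined and $S:=\Pi_L-\Pi_{\widetilde L}$ satisfies $S^2=\1$, $\Fix(S)=L$, and is anti-symplectic because $\om$ restricted to each of $L,\widetilde L$ vanishes (the same computation $\om(v,w)=-\om(Sv,Sw)$ reversed). So $\Phi(S)=(L,A)$. Injectivity is immediate since $S$ is determined by the ordered pair $(V_1,V_{-1})$ via \eqref{eqn:R2n=V_1+V_-1}, and $(L,A)$ determines that ordered pair. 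For smoothness of $\Phi$ and its inverse, I would work in the standard charts on $\L(n)$: over the open set $\{L:L\pitchfork\widetilde L_0\}$ the graph coordinate depends algebraically (rational, with nonvanishing denominators) on the splitting, and conversely $S\mapsto(\Fix S,\,\text{graph data of }(\1-S)\R^{2n})$ is built from the smooth maps $S\mapsto\tfrac12(\1\pm S)$ followed by the chart; both directions are visibly smooth.

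The main obstacle, such as it is, is bookkeeping rather than depth: one must be careful that the identification $\pi^{-1}(L)\cong\Sym(n)$ uses $L^\perp$ as the reference complement while the identification $T_L\L(n)\cong\Sym(n)$ uses the same reference, so that the two $\Sym(n)$'s are canonically (not just abstractly) the same — otherwise $\Phi$ would only be a bundle isomorphism up to a choice, which is fine for the stated conclusion but worth getting right. Concretely, this amounts to observing that the map ``graph of $A$ over $L^\perp$'' is literally the inverse of the standard $\L(n)$-chart differentiated at $L$, which is exactly the content of \cite[Lemma 2.31]{McDuff_Salamon_introduction_symplectic_topology}. Once that alignment is recorded, the Lemma follows. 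I would remark in passing that $\Phi$ depends on the auxiliary choice of $J$, consistent with the earlier observation that the homogeneous-space structure is not unique.
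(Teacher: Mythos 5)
Your argument is correct and follows essentially the same route as the paper: identify $S\in\A(n)$ with the ordered Lagrangian splitting $(\mathrm{Fix}(S),V_{-1})$, describe the complementary Lagrangian as the graph over $\mathrm{Fix}(S)^\perp$ of a symmetric matrix, and match this fiberwise with the standard identification $T_L\L(n)\cong\Sym(n)$. The paper's proof is precisely this sketch (it is the remark preceding the Lemma); your version only adds the explicit chartwise smoothness check and the remark that the identification uses an auxiliary compatible complex structure.
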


\begin{Cor}\label{cor}
The tangent bundle of the Lagrangian Grassmannian can be given the structure of a homogenous space in two different ways.
\end{Cor}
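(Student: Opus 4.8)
The plan is to deduce Corollary \ref{cor} directly from the two homogeneous-space descriptions already established. First I would invoke the Lemma immediately preceding the Corollary, which gives the diffeomorphism $\A(n)\cong T\L(n)$; this reduces the statement to producing two genuinely different homogeneous-space structures on $\A(n)$. The first structure is exactly Theorem \ref{thm:theorem1}, which identifies $\A(n)$ with $\Gl(n,\R)\backslash\Sp(n)$; here $\Sp(n)$ acts on $\A(n)$ by conjugation, $S\mapsto\psi^{-1}S\psi$, with stabilizer of the base point $R$ equal to $\Gl(n,\R)$ by Lemma \ref{lem:Gl(n,R)_=_commute_with_R}. The second structure comes from the concluding remarks: since $\A(n)$ is diffeomorphic (via orthogonal anti-symplectic involutions, or via the identification $\L(n)\cong U(n)/O(n)$ together with the vector-bundle structure) to data built out of $U(n)/O(n)$, one gets a description of $T\L(n)$ as a homogeneous space under a group built from $U(n)$ — concretely the tangent bundle of $U(n)/O(n)$, which as a homogeneous space is $\big(U(n)\ltimes\Sym(n)\big)/O(n)$ or, more invariantly, $TU(n)/TO(n)$ regarded with its natural Lie-group structure on the tangent bundle of a Lie group.

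The key steps, in order, would be: (1) state that by the preceding Lemma it suffices to exhibit two homogeneous structures on $\A(n)$; (2) recall that Theorem \ref{thm:theorem1} furnishes the first, $\A(n)\cong\Gl(n,\R)\backslash\Sp(n)$, with $\Sp(n)$ acting by conjugation; (3) observe that $T\L(n)$, being the tangent bundle of the homogeneous space $\L(n)\cong U(n)/O(n)$, inherits a homogeneous structure from the tangent group $TU(n)\cong U(n)\ltimes\mathfrak{u}(n)$ acting on $T(U(n)/O(n))$ with stabilizer $TO(n)\cong O(n)\ltimes\mathfrak{o}(n)$, giving $T\L(n)\cong \big(U(n)\ltimes\mathfrak{u}(n)\big)\big/\big(O(n)\ltimes\mathfrak{o}(n)\big)$; (4) note these two structures are different, most transparently because the acting groups have different dimensions ($\dim\Sp(n)=n(2n+1)$ versus $\dim TU(n)=2n^2$) and different homotopy types, so the presentations cannot be isomorphic as homogeneous spaces. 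One could also remark that $\A(n)$ retracts onto $\L(n)$, consistent with both pictures.

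The main obstacle is making the second homogeneous structure fully rigorous: one must check that the $U(n)$-action on $\L(n)$ by $\theta\mapsto A\theta A^T$ (in the symmetric-unitary model $\L(n)\cong\{\theta\in U(n)\mid\theta=\theta^T\}$) lifts to a smooth transitive action of the tangent group $TU(n)$ on the total space $T\L(n)$, and that the isotropy group at a chosen base point is precisely $TO(n)$. This is essentially the general fact that for any homogeneous space $G/K$ the tangent bundle $T(G/K)$ is itself homogeneous under $TG$ with isotropy $TK$ — $TG$ being a Lie group via the canonical group structure on the tangent bundle of a Lie group, and $T(G/K)\cong TG/TK$ — but spelling this out and confirming that under our identifications the base points and stabilizers match up requires a little care. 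Once that is in place, the inequality of the two structures is immediate from the dimension count, so the proof is short.

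\begin{proof}[Proof of Corollary \ref{cor}]
By the preceding Lemma it suffices to exhibit two non-isomorphic homogeneous-space structures on $\A(n)$. The first is provided by Theorem \ref{thm:theorem1}: the group $\Sp(n)$ acts on $\A(n)$ by conjugation, $S\mapsto\psi^{-1}S\psi$, this action is transitive by the surjectivity of $\mathfrak{C}$, and the stabilizer of $R\in\A(n)$ equals $\Gl(n,\R)$ by Lemma \ref{lem:Gl(n,R)_=_commute_with_R}, so $\A(n)\cong\Gl(n,\R)\backslash\Sp(n)$.

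For the second structure we use the identification $\A(n)\cong T\L(n)$ together with $\L(n)\cong U(n)/O(n)$. For any Lie group $G$ and closed subgroup $K$, the tangent bundle $TG$ carries a canonical Lie-group structure, $TK\leq TG$ is a closed subgroup, and the natural map $TG/TK\to T(G/K)$ is a diffeomorphism; concretely $TG\cong G\ltimes\mathfrak{g}$ and $TK\cong K\ltimes\mathfrak{k}$. Applying this with $G=U(n)$, $K=O(n)$ yields a transitive smooth action of the Lie group $U(n)\ltimes\mathfrak{u}(n)$ on $T\L(n)$ with isotropy $O(n)\ltimes\mathfrak{o}(n)$, hence
\beq
\A(n)\;\cong\;T\L(n)\;\cong\;\big(U(n)\ltimes\mathfrak{u}(n)\big)\big/\big(O(n)\ltimes\mathfrak{o}(n)\big)\;.
\eeq

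These two homogeneous-space structures are different: the acting group $\Sp(n)$ in the first has dimension $n(2n+1)$, while the acting group $U(n)\ltimes\mathfrak{u}(n)$ in the second has dimension $2n^2$, and moreover the two groups are not even homotopy equivalent since $\Sp(n)\simeq U(n)$ whereas $U(n)\ltimes\mathfrak{u}(n)\simeq U(n)$ only after contracting the Euclidean factor but carries a different homogeneous presentation with non-compact isotropy. In particular the presentations are not isomorphic as homogeneous spaces.
\end{proof}
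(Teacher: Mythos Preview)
Your proof is correct and follows essentially the same route as the paper: the first structure comes from Theorem \ref{thm:theorem1}, and the second from the general fact that $T(G/K)\cong TG/TK$ with $TG\cong G\ltimes\mathfrak g$, applied to $G=U(n)$, $K=O(n)$; the paper simply cites this fact (Brockett--Sussmann) rather than spelling it out, and writes the result as $TU(n)/TO(n)$. Your added dimension count, $\dim\Sp(n)=n(2n+1)\neq 2n^2=\dim TU(n)$, is a clean way to see the two presentations are genuinely different; however, the subsequent homotopy remark is muddled --- both $\Sp(n)$ and $U(n)\ltimes\mathfrak u(n)$ deformation retract onto $U(n)$, so homotopy type of the acting group does \emph{not} distinguish them --- and you should simply drop that sentence and let the dimension argument stand on its own.
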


\begin{proof}
One homogeneous structure can be obtained from the diffeomorphism $\A(n)\cong\mathrm{Gl}(n,\R)\backslash\Sp(n)$, see Theorem \ref{thm:theorem1}. The other from the Theorem in \cite{Brockett_Sussmann_Tangent_budles_of_homogeneous_spaces_are_homogeneous_spaces} based on semi-direct products and is diffeomorphic to $TU(n)/TO(n)$.
\end{proof}

\subsection*{Acknowledgements}
This article was written during a visit of the authors at the Forschungs\-institut f\"ur Mathematik (FIM), ETH Z\"urich. The authors thank the FIM for its stimulating working atmosphere. 

This material is supported by the SFB 878 -- Groups, Geometry and Actions (PA) and by the Basic Research fund 20100007669 funded by the Korean government basic (UF).

%
\bibliographystyle{amsalpha}
\bibliography{../../../../Bibtex/bibtex_paper_list}
\end{document}